\renewcommand*{\labelnamepunct}{\addcolon\space}
\newcommand{\cfeldesy}{Center for Free-Electron Laser Science CFEL, Deutsches
      Elektronen-Synchrotron DESY, Notkestr. 85, 22607 Hamburg, Germany}%
\newcommand{\uhhmaths}{Department of Mathematics, Universität 
Hamburg, Bundesstr. 55,
      20146, Hamburg, Germany}%
\newtheorem{theorem}{Theorem}[section]
\newtheorem{lemma}{Lemma}[section]
\theoremstyle{definition}
\theoremstyle{example}
\newtheorem{example}{Example}[section]
\theoremstyle{corollary}
\newtheorem{corollary}{Corollary}[section]
\theoremstyle{theorem}
\newtheorem{remark}{Remark}[section]
\numberwithin{equation}{section}
\title{Inducing Riesz Bases in $L^2$ \emph{via} Composition Operators}
\author[1,2]{Yahya Saleh\thanks{Corresponding Author:
yahya.saleh@uni-hamburg.de} \orcidlink{0000-0002-3235-217X}}
\author[1]{Armin Iske\orcidlink{0000-0003-1743-5484}} 
\affil[1]{\small \uhhmaths}
\affil[2]{\small \cfeldesy}
\date{}
\begin{document}
\maketitle
%\linenumbers
%\modulolinenumbers[1] 
\begin{abstract}\noindent%
	Let $C_h$ be a composition operator mapping $L^2(\Omega_1)$ into
	$L^2(\Omega_2)$ for some open sets $\Omega_1, \Omega_2 \subseteq
	\mathbb{R}^n$. We characterize the mappings $h$ that transform Riesz bases
	of $L^2(\Omega_1)$ into Riesz bases of $L^2(\Omega_2)$. Restricting our analysis to differentiable mappings, we
	demonstrate that mappings $h$ that preserve Riesz bases have Jacobian
	determinants that are
	bounded away from zero and infinity. We discuss implications of these results for
approximation theory, highlighting the potential of using
bijective neural networks to construct Riesz bases with favorable
approximation properties. 
\end{abstract}
\noindent
\textbf{Keywords:} Composition Operator. Riesz Basis. Orthonormal Basis.
Frame. 

\noindent \textbf{Mathematics Subject Classification:} Primary 47B33, 42C15.

\section{Introduction}

Consider the measurable spaces $\bigl(\Omega_1, \mathcal{B}(\Omega_1)\bigr)$ and
$\bigl(\Omega_2,\mathcal{B}(\Omega_2)\bigr)$, where $\Omega_1, \Omega_2
\subseteq \mathbb{R}^n$ are open
subsets and $\mathcal{B}(\Omega_1), \ \mathcal{B}(\Omega_2)$
are their respective Borel $\sigma-$algebras. Endow both spaces with the Lebesgue measure $\mu$ and let $h: \Omega_2
\to \Omega_1$ be a measurable, non-singular mapping, \ie, $\mu(B)=0$ implies $\mu(h^{-1}(B))=0$ for any $ B \in
\mathcal{B}(\Omega_1)$. $h$ induces a linear operator 
$$
C_h: L^2(\Omega_1) \to L(\Omega_2), \quad C_h f \mapsto f \circ h,
$$
where $L(\Omega_2)$ denotes the space of all measurable functions on $\Omega_2$.
If \(C_h\) acts boundedly from \(L^2(\Omega_1)\) to \(L^2(\Omega_2)\), we say
that \(C_h\) maps \(L^2(\Omega_1)\) into \(L^2(\Omega_2)\) and refer to it as a
\emph{composition operator from \(L^2(\Omega_1)\) into \(L^2(\Omega_2)\)}.

The study of composition operators has a long history in functional
and operator analysis, particularly on spaces of analytic
functions~\cite{Cowen:CompOp20:1995}, as well as on Hardy and Bergman
spaces~\cite{Nordgren:Composition1978}.
Early work by \textcite{Singh:CompOp1993} explored their boundedness, invertibility,
and compactness in  $L^p$  spaces, while \textcite{Takagi:FS232:321} extended
some of
these results to operators mapping between different  $L^p$  spaces. These
foundational studies have laid the groundwork for further investigations into
the structural and functional properties of composition operators in broader settings.

Let
$(\gamma_n)_{n=0}^\infty$ be a Riesz basis of $L^2(\Omega_1)$.
In this article, we investigate the conditions under which the sequence 
\begin{equation}
	\label{eq:perturbed_comp}
	(C_h \gamma_n)_{n=0}^\infty  
\end{equation}
forms a Riesz basis of $L^2(\Omega_2)$ and characterize its dual, see \Cref{theorem:induced_Riesz} and
\Cref{theorem:dual_Riesz}, respectively. 
Given the potential importance of such induced sequences in optimization
problems, we restrict our analysis to the case where $h$ is a differentiable
mapping. Our results show that all Riesz bases of the form~\eqref{eq:perturbed_comp} 
are induced precisely by mappings with finite volume distortion; that is, 
by mappings whose Jacobian determinants are bounded away from zero and infinity, see~\Cref{theorem:induced_onbases_diff}.  To illustrate the practical implications
of our study, we
present a simple
numerical example demonstrating how the mapping $h$ can be modelled by an
invertible neural network to construct Riesz bases of the form
\eqref{eq:perturbed_comp} with favorable approximation properties.

Our study is motivated by recent computational advances in nuclear-motion theory~\cite{Saleh:JCTC21:5221, Vogt:JCP163:15,Zhang:JCP161:024103} 
and condensed matter physics~\cite{Zhang:PRL143:246101,Xie:arXiv2105.08644},
where parameterized and optimized composition operators have been used to 
construct bases with improved approximation properties. While similar problems 
have been explored in settings such as weighted Hilbert Bergman 
spaces~\cite{Manhas:IEOT91:34}, spaces of holomorphic functions~\cite{Manhas:CAOT18:34}, 
weighted Hardy spaces in the unit disk~\cite{DongL:AMS40:1645}, and Fock spaces~\cite{Liang:RM79:127}, a unified perspective 
for $L^2$ spaces remains unexplored. To address this gap, we rely on Singh’s characterization of bounded and invertible 
composition operators~\cite{Singh:BAMS19:81}, which extends naturally to 
composition operators between \(L^2\) spaces on distinct domains.

\section{Preliminaries}
\label{sec:preliminaries}

A natural measure on $\Omega_1$ is the push-forward measure $h_\# \mu$, defined by
$$
h_\# \mu(B) := \mu(h^{-1}(B)), \quad \text{for any } B \in \mathcal{B}(\Omega_1).
$$
Since $h$ is assumed to be non-singular, $h_\# \mu \ll
\mu$, \ie, $h_\# \mu$ is absolutely continuous
with respect to $\mu$. We denote by
$g_h$ the Radon-Nikodym derivative of $h_\# \mu$ with respect to
$\mu$. 

Throughout the manuscript, we call the mapping $h$ injective if there exists a
measurable transformation $h_1 : \Omega_1 \to \Omega_2$ such that $h_1 \circ h$
coincides with the identity mapping on $\Omega_2$ up to a set of $\mu$-measure zero.
$h_1$ is called the left inverse of $h$.
Similarly, $h$ is called surjective if there exists a measurable transformation
$h_2 : \Omega_1 \to \Omega_2$ such that $h \circ h_2$ coincides with the
identity mapping on $\Omega_1$ up to a set of $\mu$-measure zero.  $h_2$ is called the
right inverse of $h$.
Finally, $h$ is said to be bijective if there exists a measurable mapping $h^{-1} : \Omega_1 \to \Omega_2$ satisfying
$$
    h^{-1} \circ h (x) = x \quad \text{for a.e. } x \in \Omega_2, \ \text{and } 
    h \circ h^{-1} (y) = y \quad \text{for a.e. } y \in \Omega_1.
$$

Necessary and sufficient conditions for a non-singular mapping $h$ to induce a
composition operator from $L^2(\Omega_1)$ into $L^2(\Omega_2)$ were
characterized by Singh and Manhas~\cite{Singh:CompOp1993} for the special case $\Omega_1 =
\Omega_2$. Later, Takagi and Yakouchi~\cite{Takagi:FS232:321} noted that the same
result holds for the more general case $\Omega_1 \neq \Omega_2$.

\begin{theorem}
	\label{theorem:composition_op}
	
	A non-singular mapping $h: \Omega_2 \to \Omega_1$ induces a
	composition operator from
	$L^2(\Omega_1)$ into $L^2(\Omega_2)$ if and only if $g_h$
	is bounded $\mu-$a.e. on $\Omega_1$. In this case, the norm of $C_h$ is given by
	$$
	\left\|C_h\right\| =  \left\| g_h \right\|_{L^\infty(\Omega_1)}^{1/2}.
	$$
\end{theorem}
\begin{proof}
	See~\cite[Corollary 2.1.2]{Singh:CompOp1993}.
\end{proof}

Without loss of generality, we assume that all functions considered in this work
are real-valued.
The main question of this work is whether the induced sequence
$(\phi_n)_{n=0}^\infty := (C_h \gamma_n)_{n=0}^\infty$ is complete in $L^2(\Omega_2)$,
i.e., whether there exists for any $f$ in $L^2(\Omega_2)$ a sequence of coefficients
$(c_n(f))_{n=0}^\infty$, $c_n \in \mathbb{R}$ for any $n \in \mathbb{N}$,  
such that 
$$
f = \sum_{n=0}^\infty  c_n(f) \phi_n.
$$
We discuss in our work
two notions of completeness. First, that of Riesz bases. Here,
the coefficients are unique, and are given by the inner product of $f$
with another Riesz basis $(\widetilde{\phi}_n)_{n=0}^\infty$ that is 
bi-orthogonal to the
original Riesz basis, \ie, 
$$
\langle \widetilde{\phi}_n, \phi_m \rangle_{\Omega_2} = \delta_{nm} \quad \text{for any 
}n,m\in \mathbb{N},
$$
where $\langle., .\rangle_{\Omega_2}$ denotes the inner product on $L^2(\Omega_2)$.
Second, that of orthonormal bases. Here, the
coefficients are unique and given by the inner product of $f$ with the basis
functions. 
%Third, that of frames, where the
%coefficients are not necessarily unique.

 An important result for our analysis is the fact that these complete sequences 
can be
characterized by operators acting on an orthonormal 
basis~\cite{Christensen:IntroFrames2016}. In particular, let
$(\gamma_n)_{n=0}^\infty$ be an orthonormal basis of $L^2(\Omega_1)$. Then 
\begin{itemize}

	\item The Riesz bases of $L^2(\Omega_2)$ are precisely the sequences $(U
	\gamma_n)_{n=0}^\infty$, where $U:L^2 (\Omega_1)\to L^2(\Omega_2)$ is a bounded bijective
	operator. 
	\item The orthonormal bases of $L^2(\Omega_2)$ are precisely the sequences $(U
	\gamma_n)_{n=0}^\infty$, where $U:L^2(\Omega_1) \to L^2(\Omega_2)$ is a unitary operator.  
\end{itemize}

\section{Inducing Bases \emph{via} Composition Operators}
\label{sec:main_results}

We begin by citing a key result for our analysis. Although this result was
	originally proven for the specific case $\Omega_1 = \Omega_2$, it is valid for the more general case $\Omega_1
	\neq \Omega_2$ using the same proof.

\begin{lemma}
	\label{lemma:inv_comp}
	Let 
	$C_h$ be a composition operator from $L^2(\Omega_1)$ into $L^2(\Omega_2)$. If $C_h$ is
	invertible, its inverse is a composition operator from
	$L^2(\Omega_2)$ into $L^2(\Omega_1)$.  
	
\end{lemma}
\begin{proof}
Since $C_h$ is invertible, it follows that $g:=C_h f$ is a characteristic
function if and only if $f$ is a characteristic function, see~\cite[Corollary 2.2.3]{Singh:CompOp1993}. Denote by $A$ the
inverse of $C_h$. It follows immediately that $Ag$ is a characteristic function
if and only if $f$ is a characteristic function.
 	Since $A$ preserves $L^2$ characteristic functions, and $\Omega_1$ and $\Omega_2$ are standard Borel 
	spaces, one can deduce that $A$ is a composition operator, see~\cite[Theorem 
	2.1.13]{Singh:CompOp1993}.
\end{proof}

We now characterize conditions on $h$ for the induced sequence 
\eqref{eq:perturbed_comp} to form a Riesz basis.
\begin{theorem}[Induced Riesz Basis]
	\label{theorem:induced_Riesz}
	Let $(\gamma_n)_{n=0}^\infty$ be a Riesz basis of $L^2(\Omega_1)$. Let $h: 
	\Omega_2 \to
	\Omega_1$ be a non-singular measurable mapping that induces a composition operator $C_h$
	from $L^2(\Omega_1)$ into $L^2(\Omega_2)$, \ie, such that $g_h$ is bounded
	$\mu-$a.e. on $\Omega_1$. The sequence $(C_h \gamma_n)_{n=0}^\infty$ is a 
	Riesz
	basis of $L^2(\Omega_2)$ if and only if $h$ is injective and there exists $r>0$ such that 
	$g_h \geq r$ $\mu-$a.e. on $\Omega_1$.
\end{theorem}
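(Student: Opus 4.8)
The plan is to use the operator characterization of Riesz bases quoted in \Cref{sec:preliminaries}: the sequence $(C_h\gamma_n)_{n=0}^\infty$ is a Riesz basis of $L^2$ if and only if $C_h$ is a bounded bijective operator on $L^2$. Since $h$ is assumed to induce a composition operator from $L^2$ into itself, $C_h$ is automatically bounded, so the task reduces to characterizing when $C_h$ is bijective in terms of $h$, and then translating injectivity and surjectivity of $C_h$ into the stated conditions on $h$ (namely, $h$ injective and $g_h$ bounded below by some $r>0$ a.e.).

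First I would handle the \emph{lower bound} direction. If $r \le g_h$ a.e., then for any $f \in L^2$ one computes $\|C_h f\|_{L^2}^2 = \int_\Omega |f\circ h|^2\, d\mu = \int_\Omega |f|^2\, g_h\, d\mu \ge r\|f\|_{L^2}^2$, using the change-of-variables/transfer formula $\int (f\circ h)\,d\mu = \int f\, g_h\, d\mu$ that follows from the definition of $g_h$ as the Radon--Nikodym derivative of $h_\#\mu$. This shows $C_h$ is bounded below, hence injective with closed range. Conversely, if $g_h$ is not bounded below by any positive constant, I would choose sets $A_k = \{x : g_h(x) < 1/k\}$ of positive measure and test $C_h$ on normalized indicator functions $\gamma$ supported (after expansion in the $\gamma_n$) appropriately — more directly, pick $f_k = \mathbf{1}_{A_k}/\mu(A_k)^{1/2}$ and observe $\|C_h f_k\|^2 = \int_{A_k} g_h\, d\mu / \mu(A_k) < 1/k$, so $C_h$ is not bounded below and cannot be part of a Riesz-basis map. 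Combining with \Cref{lemma:inv_comp}: if $C_h$ is bijective its inverse is itself a composition operator, which forces $C_h$ to be bounded below (since $\|f\| = \|C_h^{-1}C_h f\| \le \|C_h^{-1}\|_{\mathrm{op}}\|C_h f\|$), giving the lower bound on $g_h$ via the previous computation applied to $C_{h}$ and the essential boundedness of the inverse's Radon--Nikodym derivative from \Cref{theorem:composition_op}.

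Next I would address \emph{injectivity of $h$ versus surjectivity of $C_h$}. The natural statement is: $C_h$ is surjective onto $L^2$ if and only if $h$ is injective in the almost-everywhere sense defined in the Notation section (there exists measurable $\zeta$ with $\zeta\circ h = \mathrm{id}_\Omega$ a.e.). If $h$ is injective with such a left inverse $\zeta$, then $C_\zeta$ is a right inverse candidate for $C_h$: formally $C_h C_\zeta f = f\circ\zeta\circ h = f$ a.e., so $C_h$ is surjective (one must check $C_\zeta$ actually maps $L^2 \to L^2$, which will follow from the lower bound on $g_h$ and \Cref{theorem:composition_op}, since $g_\zeta$ relates to $1/g_h\circ\zeta$). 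For the converse, if $C_h$ is surjective (indeed bijective, as injectivity is already in hand from the lower bound), \Cref{lemma:inv_comp} gives $C_h^{-1} = C_\eta$ for some measurable $\eta$, and then $C_\eta C_h = \mathrm{id}$ means $f\circ h\circ\eta = f$ for all $f\in L^2$, which forces $h\circ\eta = \mathrm{id}_\Omega$ a.e.; symmetrically $\eta\circ h = \mathrm{id}_\Omega$ a.e., so $h$ is injective (with $\zeta = \eta$). Assembling: $(C_h\gamma_n)$ is a Riesz basis $\iff$ $C_h$ bounded bijective $\iff$ $C_h$ bounded below and surjective $\iff$ $g_h \ge r$ a.e.\ for some $r>0$ and $h$ injective.

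The main obstacle I anticipate is the bookkeeping around the almost-everywhere notions of injectivity/surjectivity and the identification $C_h^{-1} = C_\eta$: one must be careful that the left/right inverses produced by \Cref{lemma:inv_comp} are genuine composition operators, that their inducing maps compose with $h$ to the identity a.e.\ (not just that the operators compose to the identity), and that null sets are controlled throughout — in particular that $h$ being non-singular with $g_h$ bounded below prevents $h$ from collapsing positive-measure sets, which is what ties the operator-level statement to the pointwise-a.e.\ statement about $h$. The change-of-variables identity $\int (f\circ h)\,d\mu = \int f\, g_h\, d\mu$ is the computational workhorse and is routine, but getting the Radon--Nikodym derivative of the inverse map correct (and hence its essential boundedness, which is exactly the lower bound $g_h \ge r$ transported through $h$) is the delicate point.
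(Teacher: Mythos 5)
Your proposal is correct, and its necessity direction is essentially the paper's: both invoke \Cref{lemma:inv_comp} to write $C_h^{-1}$ as a composition operator $C_\eta$, read off $\eta\circ h=\mathrm{id}_\Omega=h\circ\eta$ almost everywhere, and then convert the essential boundedness of $g_{h^{-1}}$ into the lower bound $g_h\ge r$ (the paper does this by estimating $\mu(B)\le\|g_{h^{-1}}\|_{L^\infty}\,h_\#\mu(B)$ for Borel sets $B$, which is exactly your operator-norm inequality tested on normalized indicators). The genuine difference is in the sufficiency direction. The paper establishes surjectivity of $C_h$ by citing two results of Singh and Kumar (injectivity of $h$ gives dense range; the lower bound on $g_h$ gives closed range) and establishes injectivity of $C_h$ from $\mu\ll h_\#\mu$; you instead construct an explicit right inverse $C_\zeta$ from the measurable left inverse $\zeta$ of $h$, and get injectivity from the coercivity estimate $\|C_hf\|_{L^2}^2=\int_\Omega f^2 g_h\,d\mu\ge r\|f\|_{L^2}^2$. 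Your route is more self-contained, at the price of the verification you correctly flag: that $C_\zeta$ actually maps $L^2$ into itself. This does go through with your own tools: for any $B\in\mathcal{B}$, since $\zeta\circ h=\mathrm{id}_\Omega$ a.e.\ and the transfer formula holds for all nonnegative measurable functions, $\mu(B)=\int_\Omega(\chi_B\circ\zeta)\circ h\,d\mu=\int_\Omega(\chi_B\circ\zeta)\,g_h\,d\mu\ge r\,\zeta_\#\mu(B)$, so $\zeta_\#\mu\ll\mu$ with $g_\zeta\le 1/r$, and \Cref{theorem:composition_op} then gives $\|C_\zeta\|_{\mathrm{op}}\le r^{-1/2}$; hence $C_hC_\zeta=\mathrm{id}$ on $L^2$ and $C_h$ is onto. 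Two minor points: in your aside showing $C_h$ fails to be bounded below when $g_h$ is not, you should take the sets $A_k=\{g_h<1/k\}$ intersected with sets of finite measure so that the indicators are normalizable ($\Omega$ may have infinite Lebesgue measure), though that aside is redundant since the \Cref{lemma:inv_comp}-based argument already yields the bound; and the step ``$f\circ(\eta\circ h)=f$ for all $f\in L^2$ forces $\eta\circ h=\mathrm{id}_\Omega$ a.e.'' deserves a word about separating points with countably many measurable functions on a standard Borel space, but the paper elides this in exactly the same way.
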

\begin{proof}
	Let $U:L^2 (\Omega_1) \to L^2(\Omega_1)$ be a unitary operator such that
	$(\gamma_n)_{n=0}^\infty =  (U\phi_n)_{n=0}^\infty$ for some orthonormal
	basis $(\phi_n)_{n=0}^\infty$ of $L^2(\Omega_1)$.

	Let $(C_h \gamma_n)_{n=0}^\infty$ be a Riesz basis of $L^2(\Omega_2)$. This implies
	that $C_h U$ is bijective. Since $U$ is unitary, it immediately follows that
	$C_h = C_h U U^{-1}$ is bijective, where $U^{-1}$ denotes the inverse of $U$.
	Denote the inverse of $C_h$ by by $C^{-1}_h$. From
	\Cref{lemma:inv_comp} it follows that $C^{-1}_h: L^2(\Omega_2)
	\to L^2(\Omega_1)$ is a composition operator. There exists a mapping 
	$\zeta$ such that $C^{-1}_h =
	C_\zeta$. Since $ C_h C_\zeta f = f$ for any $f$ in $L^2(\Omega_2)$ and
	$C_\zeta C_h g= g$ for any $g$ in $L^2(\Omega_1)$, it follows that
$h$ is bijective with
	inverse $h^{-1} = \zeta$.  
	For any $B \in \mathcal{B}(\Omega_2)$ denote by $\chi_B$ the characteristic function 
	and observe that 
	\begin{align*}
		\mu(B) &= \int_{\Omega_2} \chi_B \ d\mu \\
		&= \int_{\Omega_2} \chi_B \circ h^{-1} \circ h\ d\mu \\
		&= \int_{\Omega_1} \chi_B \circ h^{-1} \ g_{h} \ d\mu \\
		&\leq \|g_{h}\|_{L^\infty(\Omega_1)} \ h_\#^{-1} \mu (B)\\
		&\leq \|g_{h}\|_{L^\infty(\Omega_1)} \ \|g_{h^{-1}}\|_{L^\infty(\Omega_2)}  \mu (B),
	\end{align*}
	where $g_{h^{-1}}$ denotes the Radon-Nikodym derivative of the measure
	$h_\#^{-1} \mu$. Since $C_{h^{-1}}$ is a composition operator from
	$L^2(\Omega_2)$ into $L^2(\Omega_1)$, it follows that
	$\|g_{h^{-1}}\|_{L^\infty(\Omega_2)}< \infty$. Further, since $C_{h^{-1}}$
	is injective, $g_{h^{-1}}>0$ $\mu$-a.e. on $\Omega_2$. Set $r = 1/\|g_{h^{-1}}\|_{L^\infty(\Omega_2)}$. It
 follows immediately that $ g_h \geq r$ $\mu-$a.e. on $\Omega_1$.
	
	Now suppose that $h$ is injective and $g_h \geq r$ $\mu-$a.e on $\Omega_1$ for 
	some $r
	>0$. Since $h$ is injective, its preimage preserves measurable sets, that is,
$h^{-1}(B) = B$ for all $B \in \mathcal{B}(\Omega_1)$. As shown in~\cite[Theorem~2.5]{Singh:BAMS19:81},
this is necessary and sufficient for the associated composition operator
$C_h$ to have a dense range in $L^2(\Omega_2)$. Together with the lower 
	bound on
	$g_h$, this implies that $C_h$ is surjective, see~\cite[Theorem
	2.3]{Singh:BAMS19:81}. To see that $C_h$ is injective, we observe that for any $B \in \mathcal{B}(\Omega_1)$ we have
	\begin{align*}
		h_\#\mu(B)  &= \int_{\Omega_1} \chi_B \ g_h \ d\mu \\
		&\geq r \ \mu (B).
	\end{align*}
	It follows that $\mu \ll h_\# \mu$. This immediately implies that $C_h$ 
	is injective.
	Since $C_h$ is also bounded we conclude that $C_h U$ is a bounded bijection.
Consequently,	$(C_h 
	\gamma_n)_{n=0}^\infty$ is a Riesz basis of $L^2(\Omega_2)$.
\end{proof}

\begin{remark} 
	Combining \Cref{theorem:composition_op} and 
	\Cref{theorem:induced_Riesz}
	demonstrates that all Riesz bases of the form $(C_h 
	\gamma_n)_{n=0}^\infty$ are
	induced by mappings $h$ that satisfy the following. 
	\begin{enumerate}[(i)]
		\item $h$ is a non-singular injective mapping.
		\item there exist $r, R >0$ such that $r \leq g_h \leq R$ $\mu-$a.e. on $\Omega_1$.
	\end{enumerate}
We observe that these conditions on $h$ ensure its bijectivity. 
\end{remark}

As we shall see, a Riesz basis of the form \eqref{eq:perturbed_comp} and its
dual are related by a multiplication operator. We now recall the necessary
theory to discuss this relationship.

A measurable mapping $k:\Omega_2 \to \Omega_2$ induces a linear operator $M_k$
defined by
$$
M_k: L^2(\Omega_2) \to L(\Omega_2), \quad M_k f(x) = k(x) f(x)
$$
for any function $f$ in $L^2(\Omega_2)$ and any $x$ in $\Omega_2$. If $M_k$ maps
$L^2(\Omega_2)$ into $L^2(\Omega_2)$, it is called a \emph{multiplication
operator from $L^2(\Omega_2)$ into itself}. Necessary and
suﬃcient condition for $M_k$ to map $L^2(\Omega_2)$ into itself is that $k$ is essentially bounded~\cite{Takagi:FS232:321}.

Let $h: \Omega_2 \to \Omega_1$ be a measurable mapping that induces a Riesz
basis $(C_h \gamma_n)_{n=0}^\infty$ of $L^2(\Omega_2)$. In such a case $h$ is
invertible and its inverse $h^{-1}$ defines the push-forward measure $h^{-1}_\#
\mu$. Denote by $g_{h^{-1}}$ the Radon-Nikodym derivative of $h^{-1}_\# \mu$
with respect to $\mu$ and note that $g_{h^{-1}} \in L^\infty (\Omega_2)$. It
follows that $M_{g_{h^{-1}}}$ is a bounded multiplication operator from
$L^2(\Omega_2)$ into itself.

\begin{theorem}
	\label{theorem:dual_Riesz}
	Let $(\gamma_n)_{n=0}^\infty$ be a Riesz basis of $L^2(\Omega_1)$ and denote
	by $(\tilde{\gamma}_n)_{n=0}^\infty$ its dual. Let 
	$h:\Omega_2 \to \Omega_1$
	be a non-singular measurable mapping such that
	$(C_h \gamma_n)_{n=0}^\infty$ is a Riesz basis of $L^2(\Omega_2)$. Then $(M_{g_{h^{-1}}} C_h
	\tilde{\gamma}_n)_{n=0}^\infty$ is the bi-orthogonal dual of $(C_h
	\gamma_n)_{n=0}^\infty$.
\end{theorem}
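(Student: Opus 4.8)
The plan is to combine the operator-theoretic characterization of Riesz bases recalled in \Cref{sec:preliminaries} with a change-of-variables identity for the Radon--Nikodym derivative $g_{h^{-1}}$.

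First I would unpack what the hypothesis already provides. Since $(C_h\gamma_n)_{n=0}^\infty$ is a Riesz basis, $C_h$ is bounded and bijective, so by \Cref{lemma:inv_comp} its inverse is again a composition operator; by the result preceding this theorem showing that such $h$ is bijective, we have $h$ bijective and $C_h^{-1}=C_{h^{-1}}$. Applying \Cref{theorem:composition_op} to $C_{h^{-1}}$ shows $h^{-1}$ is non-singular with $g_{h^{-1}}\in L^\infty$, so $M_{g_{h^{-1}}}$ and hence $M_{g_{h^{-1}}}C_h$ are bounded on $L^2$ and $(M_{g_{h^{-1}}}C_h\gamma_n)_{n=0}^\infty$ is a well-defined sequence in $L^2$. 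Repeating the argument from the proof of \Cref{theorem:induced_Riesz} with $h^{-1}$ in place of $h$ (legitimate since $(h^{-1})^{-1}=h$ and $g_h\in L^\infty$) gives a positive lower bound $g_{h^{-1}}\ge 1/\|g_h\|_{L^\infty}$, so $M_{g_{h^{-1}}}$ is boundedly invertible and therefore $(M_{g_{h^{-1}}}C_h\gamma_n)_{n=0}^\infty$ is itself a Riesz basis. Because a sequence in $L^2$ that is bi-orthogonal to a complete sequence is uniquely determined, it then suffices to verify $\langle M_{g_{h^{-1}}}C_h\gamma_n, C_h\gamma_m\rangle=\delta_{nm}$ for all $n,m$.

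The core of the proof is the identity $\int_\Omega (\psi\circ h)\,g_{h^{-1}}\,d\mu=\int_\Omega \psi\,d\mu$ for every $\psi\in L^1$. I would derive it from the image-measure theorem applied to $h^{-1}$: for non-negative measurable $\phi$, $\int_\Omega \phi\circ h^{-1}\,d\mu=\int_\Omega \phi\,d\big((h^{-1})_\#\mu\big)=\int_\Omega \phi\,g_{h^{-1}}\,d\mu$, and this extends to all $\phi$ with $\phi\,g_{h^{-1}}\in L^1$. Substituting $\phi=\psi\circ h$, using $h\circ h^{-1}=\mathrm{id}_\Omega$ almost everywhere, and noting that $\psi\circ h=C_h\psi\in L^1$ (since $\|C_h\psi\|_{L^1}\le\|g_h\|_{L^\infty}\|\psi\|_{L^1}$) together with $g_{h^{-1}}\in L^\infty$, yields the claimed identity. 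Finally, taking $\psi=\gamma_n\gamma_m\in L^1$ and observing $\psi\circ h=(C_h\gamma_n)(C_h\gamma_m)$ gives $\langle M_{g_{h^{-1}}}C_h\gamma_n, C_h\gamma_m\rangle=\int_\Omega g_{h^{-1}}\,(C_h\gamma_n)(C_h\gamma_m)\,d\mu=\int_\Omega\gamma_n\gamma_m\,d\mu=\delta_{nm}$, which finishes the argument.

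The step I expect to be the main obstacle is the measure-theoretic bookkeeping in the change-of-variables identity: getting the correct Radon--Nikodym derivative ($g_{h^{-1}}$, not $g_h$) on the correct side, and justifying the passage from indicator functions to general $L^1$ integrands by controlling integrability. An equivalent but more structural route would be to compute the adjoint $C_h^\ast=M_{g_h}C_{h^{-1}}$ by the same change of variables, then note $(C_h^{-1})^\ast=(C_h^\ast)^{-1}=M_{g_{h^{-1}}}C_h$ (applying the adjoint formula to $h^{-1}$) and invoke the fact that the dual of a Riesz basis $U\gamma_n$ is $(U^\ast)^{-1}\gamma_n$; this explains conceptually why $M_{g_{h^{-1}}}C_h$ appears, but ultimately relies on the same computation.
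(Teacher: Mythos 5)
Your proposal is correct and follows the paper's proof almost step for step in its first two parts: the same chain \Cref{lemma:inv_comp} $\to$ \Cref{theorem:composition_op} $\to$ \Cref{theorem:induced_Riesz} (applied with $h^{-1}$ in place of $h$) shows that $g_{h^{-1}}$ is essentially bounded above and below, hence that $M_{g_{h^{-1}}}C_h$ is a bounded bijection and the candidate dual is a Riesz basis; and the same change of variables through $(h^{-1})_{\#}\mu$, using $h\circ h^{-1}=\mathrm{id}_\Omega$, gives $\langle M_{g_{h^{-1}}}C_h\gamma_n, C_h\gamma_m\rangle=\delta_{nm}$. The one place you genuinely diverge is the final step. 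The paper does not appeal to uniqueness of bi-orthogonal sequences; it verifies the reconstruction formula $f=\sum_n\langle f, M_{g_{h^{-1}}}C_h\gamma_n\rangle\,C_h\gamma_n$ directly, by expanding $C_{h^{-1}}f$ in the orthonormal basis $(\gamma_n)_{n=0}^\infty$, rewriting each coefficient $\langle C_{h^{-1}}f,\gamma_n\rangle$ as $\langle f, M_{g_{h^{-1}}}C_h\gamma_n\rangle$ via the same change of variables, and then applying $C_h$ to both sides. Your shortcut --- a sequence bi-orthogonal to a complete sequence is uniquely determined, and the unique bi-orthogonal companion of a Riesz basis is its canonical dual --- is standard Riesz-basis theory (available in the Christensen reference the paper cites) and is valid here, so nothing is lost; it simply outsources to the general theory what the paper chooses to compute by hand. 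Your closing remark identifying $M_{g_{h^{-1}}}C_h$ with $(C_h^{\ast})^{-1}$ is the cleanest conceptual explanation of why this particular operator appears, and is consistent with the characterization of Riesz bases as images $U\gamma_n$ with dual $(U^{\ast})^{-1}\gamma_n$.
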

\begin{proof}
	To prove that the sequence $(M_{g_{h^{-1}}} C_h \tilde{\gamma}_n)_{n=0}^\infty$ 
	is a
	Riesz basis it suffices to show that the operator
	$M_{g_{h^{-1}}}C_h$ is a bounded bijection. $C_h$ acts boundedly from $L^2(\Omega_1)$ into $L^2(\Omega_2)$.
	Moreover, $(C_h \gamma_n)_{n=0}^\infty$ being a Riesz basis implies that
	$C_h$ is a bijection with inverse $C_{h^{-1}}$. This, in turn, implies that
	$h^{-1}$ is non-singular and $g_{h^{-1}}$ is
	bounded $\mu-$a.e. on $\Omega_2$, see \Cref{lemma:inv_comp} and 
	\Cref{theorem:composition_op}. By
	\Cref{theorem:induced_Riesz} it holds that $g_{h^{-1}}$ is bounded away 
	from
	zero for a.e. $x \in \Omega_2$. By combining these results we deduce that 
	$M_{g_{h^{-1}}}$ is a bounded
	bijection. Therefore, $M_{g_{h^{-1}}} C_h$ is a bounded bijection.
	
	To see that $(C_h \gamma_n)_{n=0}^\infty$ and $(M_{g_{h^{-1}}} C_h
	\tilde{\gamma}_n)_{n=0}^\infty$ are bi-orthogonal observe that for any $n, m \in 
	\mathbb{N}$ we have 
	\begin{align*}
		\int_{\Omega_2} (\gamma_n \circ h)\ (\tilde{\gamma}_m \circ h)\ g_{h^{-1}} \ d\mu 
		&= \int_{\Omega_2} (\gamma_n \circ h) \ (\tilde{\gamma}_m \circ h) \ dh^{-1}_\# 
		\mu \\
		&= \int_{\Omega_1} (\gamma_n \circ h \circ h^{-1}) \ (\tilde{\gamma}_m \circ h 
		\circ h^{-1}) \ d\mu \\
		&= \delta_{nm}.
	\end{align*}
	It remains to show that the pair $(C_h \gamma_n)_{n=0}^\infty$ and
	$(M_{g_{h^{-1}}} C_h \tilde{\gamma}_n)_{n=0}^\infty$ are dual. In other words, it
	remains to show that for any $f$ in $L^2(\Omega_2)$ we have
	$$
	f = \sum_{n=0}^\infty \langle f, M_{g_{h^{-1}}} C_h \tilde{\gamma}_n \rangle_{\Omega_2} \ C_h 
	\gamma_n.
	$$
	Note that $C_{h^{-1}} f \in L^2(\Omega_1)$ for any $f \in L^2(\Omega_2)$. Since
	$(\gamma_n)_{n=0}^\infty$ is a Riesz basis of $L^2(\Omega_1)$ it follows that 
	\begin{align*}
		C_{h^{-1}} f &= \sum_{n=0}^\infty \ \langle C_{h^{-1}} f, \tilde{\gamma}_n 
		\rangle_{\Omega_1} \ \gamma_n \\
		&= \sum_{n=0}^\infty \ \biggl(\int_{\Omega_1} f \circ h^{-1} \ \tilde{\gamma}_n \ d 
		\mu \biggr)\ \gamma_n \\
		&= \sum_{n=0}^\infty \ \biggl(\int_{\Omega_2} f \ \tilde{\gamma}_n \circ h \ 
		g_{h^{-1}} \ d \mu \biggr)\ \gamma_n \\
		&= \sum_{n=0}^\infty \ \langle f, M_{g_{h^{-1}}} C_h \tilde{\gamma}_n \rangle_{\Omega_2}\ 
		\gamma_n.
	\end{align*}
	The result follows by applying $C_h$ to both sides.
\end{proof}

For the completeness of our results, we conclude this section by remarks on the possibility of
generating orthonormal bases and frames of the form \eqref{eq:perturbed_comp}.

\begin{theorem}[Induced Orthonormal Bases]
	\label{theorem:induced_onbases}
	Let $(\gamma_n)_{n=0}^\infty$ be an orthonormal basis of $L^2(\Omega_1)$. Let $h: 
	\Omega_2 \to
	\Omega_1$ be a non-singular measurable mapping that induces a composition operator 
	$C_h$
	from $L^2(\Omega_1)$ into $L^2(\Omega_2)$, \ie, such that $g_h$ is bounded
	$\mu-$a.e. on $\Omega_1$.
	The sequence $(C_h \gamma_n)_{n=0}^\infty$ is an
	orthonormal basis of $L^2(\Omega_2)$ if and only if $g_h=1$ $\mu$-a.e. on $\Omega_1$.
\end{theorem}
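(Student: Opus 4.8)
The plan is to characterize orthonormal bases of the form $(C_h\gamma_n)_{n=0}^\infty$ via the operator-theoretic fact cited in \Cref{sec:preliminaries}: such a sequence is an orthonormal basis of $L^2$ if and only if $C_h$ is unitary. Since $C_h$ is already bounded by hypothesis, being unitary amounts to being a bijective isometry; and because a surjective isometry is automatically unitary, it suffices to show that $C_h$ is an isometry if and only if $h$ is measure-preserving, and then verify that the isometry property already forces surjectivity here (or, alternatively, invoke \Cref{theorem:induced_Riesz} to get bijectivity for free, since a measure-preserving map has $g_h \equiv 1$, which is bounded below by $r=1$).

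For the forward direction, suppose $(C_h\gamma_n)$ is an orthonormal basis, so $C_h$ is unitary, hence in particular norm-preserving: $\|f\circ h\|_{L^2} = \|f\|_{L^2}$ for every $f\in L^2$. Applying this with $f = \chi_B$ for $B\in\mathcal{B}$ gives $\int_\Omega \chi_B\circ h\, d\mu = \int_\Omega \chi_B\, d\mu$, i.e. $h_\#\mu(B) = \mu(B)$; since $B$ was arbitrary, $h$ is measure-preserving. (One should note $\chi_B\in L^2$ only when $\mu(B)<\infty$, but this is enough: a measure that agrees with Lebesgue measure on all finite-measure Borel sets agrees with it everywhere by inner regularity / monotone convergence.)

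For the converse, suppose $h$ is measure-preserving. Then $h_\#\mu = \mu$, so $h$ is non-singular with $g_h \equiv 1$, which is essentially bounded; by \Cref{theorem:composition_op}, $C_h$ maps $L^2$ into itself with $\|C_h\|_{\mathrm{op}} = 1$. Moreover, for any $f\in L^2$, the change-of-variables identity $\int_\Omega (f\circ h)\,d\mu = \int_\Omega f\, dh_\#\mu = \int_\Omega f\, d\mu$ applied to $|f|^2$ (noting $|f|^2\circ h = |f\circ h|^2$) gives $\|C_h f\|_{L^2} = \|f\|_{L^2}$, so $C_h$ is an isometry. It remains to show $C_h$ is onto. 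Since $g_h \equiv 1 \geq 1 > 0$ is bounded below and $h$ is non-singular, \Cref{theorem:induced_Riesz}'s argument shows $C_h$ is surjective once $h$ is injective; injectivity of $h$ follows because $\mu \ll h_\#\mu$ (indeed $\mu = h_\#\mu$), which as in that proof forces $C_h$ to be one-to-one, and a non-singular map with $g_h$ bounded below is bijective by the intermediate theorem preceding \Cref{theorem:dual_Riesz}. Hence $C_h$ is a surjective isometry, therefore unitary, and $(C_h\gamma_n)_{n=0}^\infty$ is an orthonormal basis.

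The main obstacle is the surjectivity of $C_h$: an isometry need not be onto in general, so one cannot conclude "unitary" from "measure-preserving" by the norm computation alone. The cleanest route is to piggyback on the machinery already developed — $g_h\equiv1$ trivially satisfies the two-sided bound in the Remark after \Cref{theorem:induced_Riesz}, so the induced sequence is at least a Riesz basis, and the isometry computation then upgrades it to orthonormal. A secondary subtlety worth a line is the $\chi_B$-with-infinite-measure point in the forward direction, handled by restricting to finite-measure sets and extending.
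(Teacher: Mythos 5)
Your forward direction is correct and is essentially the paper's argument: the paper tests unitarity of $C_h$ against arbitrary pairs $f,p$ and reads off $g_h=1$ a.e.\ from $\langle f,p\rangle=\int_\Omega f\,p\,g_h\,d\mu$, while you test the isometry on characteristic functions of finite-measure sets and extend; these are interchangeable.

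The converse, however, has a genuine gap, and it sits exactly at the point you yourself flagged as the main obstacle. You claim that ``injectivity of $h$ follows because $\mu\ll h_\#\mu$ \dots{} which forces $C_h$ to be one-to-one.'' But injectivity of $C_h$ does not yield injectivity of $h$; the implication runs in the wrong direction. Concretely, take $\Omega=(0,1)$ and $h(x)=2x\bmod 1$: then $h_\#\mu=\mu$, so $h$ is measure-preserving and $C_h$ is an injective isometry, yet $h$ is two-to-one and the range of $C_h$ is the proper closed subspace of functions invariant under $x\mapsto x+\tfrac12\ (\mathrm{mod}\ 1)$, so $C_h$ is not surjective. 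This defeats both of your proposed repairs: the surjectivity step inside the proof of \Cref{theorem:induced_Riesz} uses injectivity of $h$ (to get dense range of $C_h$ via \cite[Corollary 2.4]{Singh:BAMS19:81}) as an input, and the unlabelled theorem preceding \Cref{theorem:dual_Riesz} likewise takes injectivity of $h$ as a hypothesis and concludes surjectivity of $h$ from injectivity of $C_h$ --- you have inverted that theorem. So your argument never establishes that $C_h$ is onto. For comparison, the paper's own converse does not route through \Cref{theorem:induced_Riesz} at all: it observes that $g_h=1$ makes $C_h$ inner-product preserving and then cites \cite[Theorem 2.3]{Singh:BAMS19:81} directly for surjectivity. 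The example above indicates that measure preservation alone cannot force surjectivity of $C_h$ without some injectivity input on $h$, so this is a substantive issue to raise with the statement itself, not merely a presentational one; in any case your proposed derivation does not close the gap.
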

\begin{proof}Trivial.
\end{proof}

\Cref{theorem:induced_onbases} demonstrates that non-trivial orthonormal bases
cannot be generated by composition operators alone.
Essentially, the problem here is the difficulty of preserving the
orthonormality of the underlying basis. To see this, note, \eg, that for any $m 
\in \mathbb{N}$, 
$$
\int_{\Omega_2} (C_h\gamma_m)^2 \ d\mu = \int_{\Omega_1} \gamma_m^2\ g_h \ 
d\mu.
$$
Requiring that this equals 1 indeed poses a severe restriction on $h$,
namely that $g_h=1$ for a.e. $x \in \Omega_1$. Generating orthonormal bases of
$L^2(\Omega_2)$ can be achieved by using a different approach, such as a weighted
composition operator. To this end, assume that $h: \Omega_2 \to \Omega_1$ is
non-singular and bijective. Since it is bijective, it follows that $h(B) \in
\mathcal{B}(\Omega_1)$ for any $B$ in $\mathcal{B}(\Omega_2)$~\cite[Corollary
15.2]{Kechris:SetTh1:2012}. Note that $h$ induces the pullback measure $h^\# \mu$
on $\Omega_2$ defined by $h^\# \mu( B) := \mu(h(B))$ for any $B \in
\mathcal{B}(\Omega_2)$. Denote by $k_h$ the Radon-Nikodym derivative of $h^\#
\mu$ with respect to $\mu$ and note that the operator 
$$
W_h f: L^2(\Omega_1) \to L(\Omega_2), \quad W_h := (f \circ h) \cdot k^{1/2}_{h}
$$
maps $L^2(\Omega_1)$ into $L^2(\Omega_2)$. Moreover, $W_h$ is unitary and hence
it follows that $(W_h \gamma_n)_{n=0}^\infty$ is an orthonormal basis of
$L^2(\Omega_2)$.

We note that sequences of the form
$(W_h \gamma_n)_{n=0}^\infty$ appeared in some previous studies~\cite{Saleh:PAMM23:e202200239}. However, these
results assumed more regularity on $h$.
\begin{remark}[Flow-Induced Frames] Another important type of complete
	sequences in $L^2$ is that of frames. In contrast to orthonormal and Riesz 
	bases, expansion coefficients of frames are not necessarily unique.
	One might wonder what the precise conditions on $h$ are for the induced 
	sequence \eqref{eq:perturbed_comp} 
	to
	form a frame.     
	It can be demonstrated that frames of the form $(C_h \gamma_n)_{n=0}
	^\infty$ are induced precisely by non-singular mappings $h$ that are 
	injective and such
	that $g_h$ is bounded away from zero almost
	everywhere on its support. Deriving a canonical dual frame for such a frame
	is, however, not as straightforward as in the case of Riesz bases of the form 
	\eqref{eq:perturbed_comp}, since $h$ is not necessarily bijective.
\end{remark}

\section{Induced Bases \emph{via} 
Differentiable Mappings}
\label{sec:relaxed_results}
Bases of the form \eqref{eq:perturbed_comp} are of interest in
optimization problems, particularly those involving differential operators. 
To facilitate the use of such bases in applications, we state in
this section the necessary and sufficient conditions for completeness in $L^2(\Omega_2)$
when $h$ is a differentiable mapping.

Under the conditions of \Cref{theorem:composition_op} it immediately follows
that $h_\# \mu$ is locally finite. Hence, assuming that $h$ is everywhere 
differentiable, it follows by
Lebesgue differentiation theorem~\cite{Schilling:MeasureTheory2017} that 
$$
g_h = \frac{1}{|\det J_h| \circ h^{-1}} \qquad \mu-\text{a.e. on } \Omega_1,
$$
where $J_h$ denotes the Jacobian of $h$.
Using the inverse function
theorem, we have 
$$
\frac{1}{|\det J_h| \circ h^{-1}} = |\det J_{h^{-1}}| \qquad 
\mu-\text{a.e. on } \Omega_1.
$$

Based on the gained regularity we can now state the following. 
\begin{corollary}
	\label{theorem:induced_onbases_diff}
	Let $(\gamma_n)_{n=0}^\infty$ be a Riesz basis of $L^2(\Omega_1)$ and let $h: 
	\Omega_2
	\to \Omega_1$ be an everywhere differentiable and injective mapping that induces a 
	composition
	operator from $L^2(\Omega_1)$ into $L^2(\Omega_2)$.
	Then $(C_h \gamma_n)_{n={0}}^\infty$ is a Riesz basis of $L^2(\Omega_2)$ if and only
	if $r \leq \det J_h \leq R$ for a.e. $x$ in $\Omega_2$ and some $r, R >0$.
\end{corollary}
\begin{proof}
	Note that when $h$ is differentiable and injective, the lower and upper bound on $\det 
	J_h$
	imply that $h$ is bijective. Therefore, the result follows immediately from 
	\Cref{theorem:induced_Riesz},
	 and the application of the inverse
	function theorem.
\end{proof}
Observe that in the case where $(C_h \gamma_n)_{n=0}^\infty$ is a Riesz 
basis, its dual is
given by 
$$(M_{\det J_h} C_h \tilde{\gamma}_n)_{n=0}^\infty = (\tilde{\gamma}_n \circ h \ \det
J_h)_{n=0}^\infty,$$
where $(\tilde{\gamma}_n)_{n=0}^{\infty}$ denotes the dual of
$(\gamma_n)_{n=0}^{\infty}$.

In one dimension, the condition $r \leq \det J_h \leq R$ a.e. on $\Omega_2$ simplifies to 
$r \leq h' \leq R$, which means that $h$ is $(r, R)$-bi-Lipschitz, \ie,
\[
r\,|x - y| \leq |h(x) - h(y)| \leq R\,|x - y|, 
\qquad \text{ for a.e. }\, x, y \in \Omega_2.
\]

\begin{remark}
	Often,
	continuous differentiability is assumed to use the inverse function theorem.
	However, differentiability everywhere is enough, 
	see~\cite[Theorem 1]{Raymond:Math49:141}.
\end{remark}
\section{Application: learning a basis with improved approximation properties}
\label{sec:summary}
The use of bases for approximation problems is ubiquitous in
applied and engineering sciences. Typically, an unknown function $f$ on a domain
$\Omega \subseteq \mathbb{R}^n$ is 
approximated
in the linear span of a truncated basis 
$(\gamma_n)_{n=0}^{N-1}$ of the space $L^2(\Omega)$ for
some $N \in \mathbb{N}$. Due to the completeness of $(\gamma_n)_{n=0}^\infty$ in
$L^2(\Omega)$, 
the approximation
error converges to zero as $N \to \infty$. In practice, the accuracy of
the approximation and the convergence rate highly depend on the choice of the 
basis~\cite{Gottlieb:SpectralMethods:1977}. This dependence renders
methods to choose an optimal basis for a given problem 
highly desirable.

We argue here that one can construct a problem-specific basis by
optimizing a perturbation to an orthonormal basis \emph{via} a
composition operator. The following example suggests that a perturbation 
induced
by a function as
simple as a shifting mapping can lead to a significant improvement in the 
approximation of
a target function.

\begin{example}[]
	Let $\Omega_2=\Omega_1=\mathbb{R}$ and set $(\gamma_n)_{n=0}^\infty$ to be the sequence of Hermite 
	functions. These
	functions are given by $\gamma_n(x) = a_n h_n(x) \exp(-x^2/2)$, where 
	$h_n$ is
	the $n$-th Hermite polynomial and $a_n$ is a normalization constant. Let 
	$f$ be a normalized Gaussian function centered around a
	point $a$, \ie, 
	$$f(x) = \frac{1}{{\pi}^{1/4}}
	\exp(-(x-a)^2/2),$$
	
	for $x \in \mathbb{R}$. Consider approximating $f$ in 
	the linear span of $(\gamma_n)_{n=0}^{N-1}$ for
	some $N \in \mathbb{N}$. Hermite functions
	form an orthonormal basis of $L^2(\mathbb{R})$. Therefore, the approximation error in
	$L^2(\mathbb{R})$ converges to zero as $N\to \infty$. 
	However, the
	approximation error in $L^2(\mathbb{R})$ becomes small for $N \gg \frac{e}{2}
	a^2$, where $e$ is Euler's number.This follows from standard results on Hermite function approximation combined with Stirling’s asymptotic formula for N!~\cite[Chapter III. Theorem 1.2.]{Lubich:QCMD}.
	In other words, to have a good approximation one requires a number of
	functions that depends nonlinearly on the center of the Gaussian $a$. 
	Consider
	now the modified basis $(C_h \gamma_n)_{n=0}^\infty$ where
	$h(x) =
	x-a$. Note that $f = \gamma_0 \circ h$, \ie, one needs only one
	function of the sequence $(C_h \gamma_n)_{n=0}^\infty$ to reproduce the 
	target function exactly.   
\end{example}

Composing orthonormal bases with linear
mappings is a common practice in computational 
mathematics~\cite{Gottlieb:SpectralMethods:1977}. Such mappings are often chosen
based on insight or formal analytical reasoning~\cite{Chou:APNM183:201}.
Composition with non-linear mappings are more scarce in the 
literature, since it is harder to identify effective nonlinear mappings
based on insight. However, given a class $\mathfrak{H}$ of adequate mappings, one can use numerical optimization techniques to
choose an optimal mapping $h^* \in \mathfrak{H}$ for a given problem. In general, one can end up with a mapping $h^*$ that leads to a
sequence $(C_{h^*} \gamma_n)_{n=0}^\infty$ that is not complete. Our results
demonstrate that this can be avoided by setting $\mathfrak{H}$ to be a class of
invertible mappings with bounded volume distortion.  

While there are several ways to construct such classes $\mathfrak{H}$, one popular approach is to use neural networks.  Indeed, there exist
many invertible neural-network architectures~\cite{Kobyzev:PAMI43:3964}, such as, \eg, invertible residual 
neural networks~\cite{Behrmann:ICML2019:573}. Here, the network $h$ can be written as a composition of a sequence of $M$ blocks, \ie, $h
= h_1 \circ h_2 \circ \ldots \circ h_M$, where each block $h_i$ is given by
$$
h_i(x) = x + \text{NN}(x),
$$
and $\text{NN}$ is a standard multi-layer perceptron. Requiring $\text{NN}$ to be Lipschitz with a
Lipschitz constant $L<1$ ensures that $h_i$ is a
$\bigl((1-L)-(1+L)\bigr)$-bi-Lipschitz. This, in turn, implies that Jacobian
determinant of $h$ is bounded from above and below. Such networks are often used to generate
complex probability measures from simple ones and are called \emph{normalizing flows}~\cite{Papamakarios:JMLR22:1}. Our results suggest that the 
same
flows can be used to generate problem-specific bases. The following numerical example illustrates the idea. 
\begin{example}[Numerical Evidence for Improved Approximation using Composition with a nonlinear Mapping] 
	\label{example:2}
	Consider
the univariate functions $f_1$ and $f_2$ given by
\begin{align}
\label{eq:target_function}
	f_1(x) &= \sin(2 |x|) \ \exp(-x^2/2) \nonumber,\\
f_2(x) &= \sin(2x) \ \exp(-x^4/2) 
\end{align}
and plotted in \Cref{fig:1}.

\begin{figure*}[h!]
    \vskip 0.2in
    \centering
    \includegraphics[width=\textwidth]{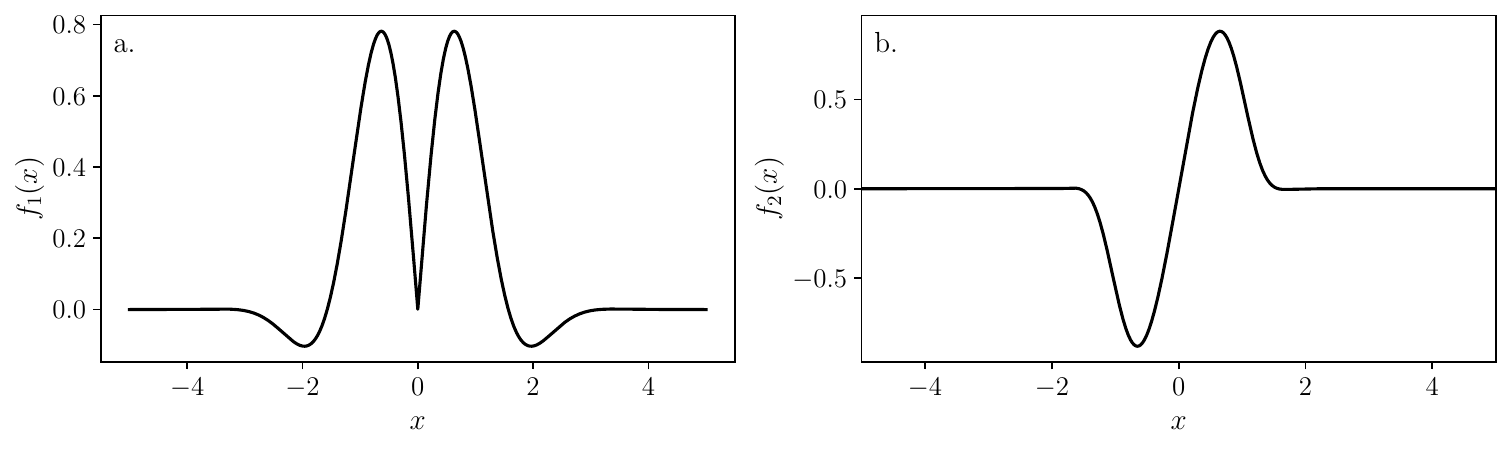}
    \caption{Plot of the functions $f_1$ (panel a) and $f_2$ (panel b) given by \eqref{eq:target_function}.}
   \label{fig:1}
   \vskip -0.2in
\end{figure*}

Clearly, the functions $f_1$ and $f_2$ belong to $L^2(\mathbb{R})$ and hence they can be
approximated in the linear span of Hermite functions $(\gamma_n)_{n=0}^{N-1}$
for some $N \in \mathbb{N}$. However, Hermite approximations of both functions
converge in a suboptimal manner. For $f_1$, this is due to the non-smooth behavior at $x=0$.
For $f_2$, this is due to its faster decay with respect to the weight
$\exp(-x^2/2)$ inherent to Hermite functions. 

Consider now
approximating $f_1$ and $f_2$ in the linear span of $(\gamma_n \circ
h_1)_{n=0}^{N-1}$ and $(\gamma_n \circ
h_2)_{n=0}^{N-1}$, respectively, where
$h_1$ and $h_2$ are bi-Lipschitz mapping that we modeled using a linear mapping composed with
an invertible residual neural
network of one block, \ie, $h_1$ and $h_2$ are given by 
\begin{align*}
h_1(x) &= \alpha_1 (x + \text{NN}_{\theta_1}(x)) + \beta_1, \\
h_2(x) &= \alpha_2 (x + \text{NN}_{\theta_2}(x)) + \beta_2
\end{align*}
where $\text{NN}_\theta$ denotes a multi-layer perceptron of one layer composed of 8
hidden units that use nonlinear activation functions. The parameters $\theta_1,
\theta_2$ along with the linear parameters $\alpha_1, \beta_1, \alpha_2, \beta_2
\in \mathbb{R}$ were
optimized to minimize the $L^2$-error in
approximating $f_1$ and $f_2$ in the linear span of $(\gamma_n \circ
h_1)_{n=0}^9$ and $(\gamma_n \circ h_2)_{n=0}^9$, respectively.
\Cref{fig:2} shows the perturbing functions $h_1$ and $h_2$. We estimated the
Lipschitz constants of $h_1$ and $h_2$ by computing the minimum and maximum of
$h'$ over a uniform grid of $5000$ points discretizing the domain $[-8, 8]$. The
Lipschitz constants are summarized in
\Cref{tab:lipschitz_constants}.

\begin{figure*}[h!]
    \vskip 0.2in
    \centering
    \includegraphics[width=0.3\textwidth]{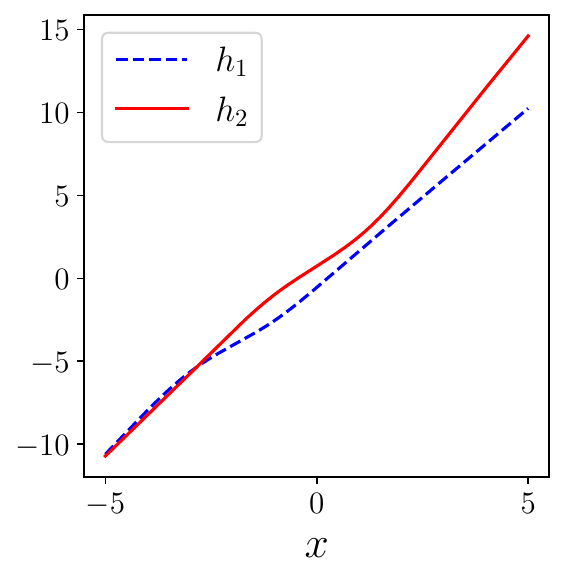}
    \caption{Plot of the bi-Lipschitz mappings $h_1$ and $h_2$ used to perturb Hermite functions for approximating the target functions $f_1$ and $f_2$.}
   \label{fig:2}
   \vskip 0.1in
\end{figure*}

\begin{table}[h!]
\centering
\caption{Estimated lower and upper Lipschitz constants for $h_1$ and $h_2$.}
\begin{tabular}{c|c|c}
\hline
Function & $L_{\text{lower}}$ & $L_{\text{upper}}$ \\
\hline
$h_1$ & 1.39 & 2.68 \\
$h_2$ & 1.61 & 3.22 \\
\hline
\end{tabular}
\label{tab:lipschitz_constants}
\end{table}
\end{example}

Following our theoretical analysis, $(\gamma_n \circ h_1)_{n=0}^{\infty}$ and $(\gamma_n \circ h_2)_{n=0}^{\infty}$ are
bases of $L^2(\mathbb{R})$ and the duals are given by $\bigl(\gamma_n \circ h_1 \
(h_1')\bigr)_{n=0}^{N-1}$, $\bigl(\gamma_n \circ h_2 \
(h_2')\bigr)_{n=0}^{N-1}$, respectively. Using these results, we computed the expansion
coefficients for multiple values of $N$ and compared the errors of the resulting
approximations with the errors of approximations obtained by using Hermite functions. \Cref{fig:3} shows the approximation error for different values of
$N$. The results show that more accurate approximations can be obtained with
our choice of the perturbing functions $h_1$ and $h_2$. Notably, the approximation of \( f_2 \) in the linear span of 
\( (\gamma_n \circ h_2)_{n=0}^{N-1} \) exhibits both higher accuracy 
and a faster convergence rate compared to the standard Hermite 
approximations. In contrast, for \( f_1 \), the convergence rates of 
the Hermite and perturbed Hermite approximations are comparable.  
\begin{figure*}[h!]
    \vskip 0.2in
    \centering
    \includegraphics[width=\textwidth]{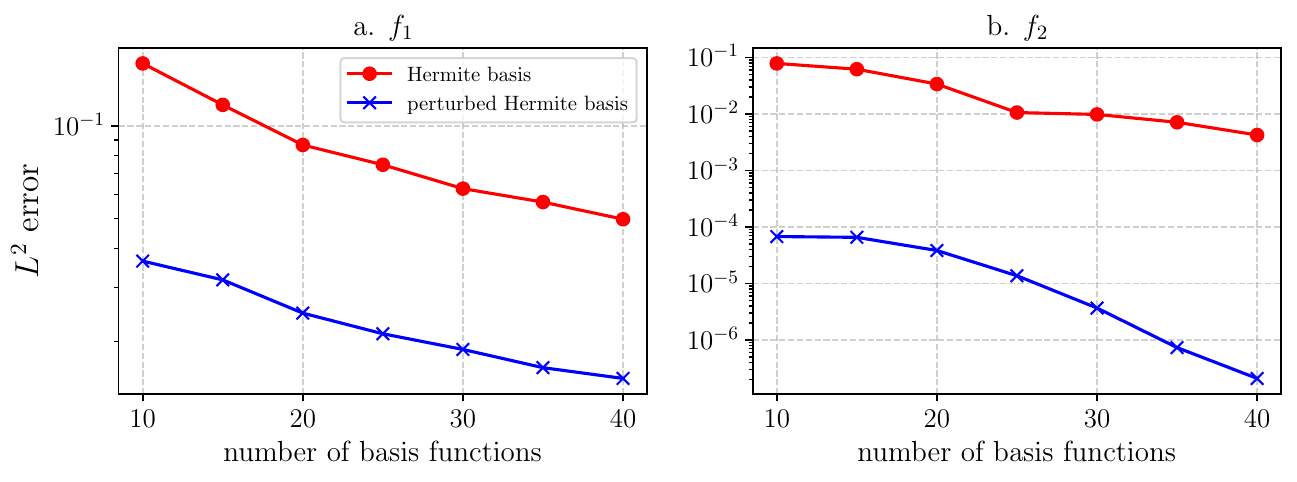}
    \caption{Convergence of the $L^2$-error in approximating the functions $f_1$
    (panel a) and $f_2$ (panel b)
   in the linear span of Hermite functions and the perturbed bases.}
   \label{fig:3}
   \vskip 0.1in
\end{figure*}

\Cref{fig:4} shows how Hermite basis functions transform under the
composition operators $C_{h_1}$ and $C_{h_2}$. Clearly, the perturbed basis
functions $(\gamma_n \circ h_1)_{n=0}^3$ exhibit
sharper behavior around the origin, at which the function $f_1$ is non-smooth.
Similarly, the functions $(\gamma_n \circ h_2)_{n=0}^3$ decay faster than
Hermite functions to match the fast decay of $f_2$.
\begin{figure*}[h!]%[H]
    \centering
    \includegraphics[width=\textwidth]{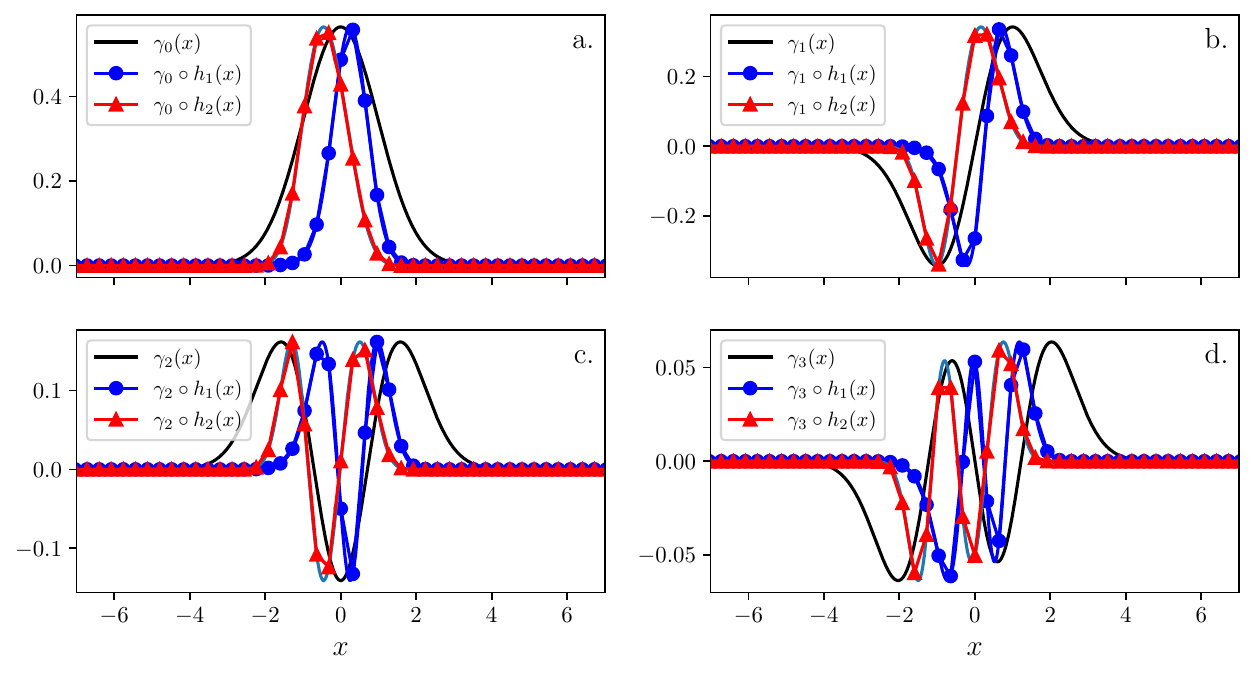}
    \caption{Plotted are the Hermite functions $(\gamma_n)_{n=0}^3$ (solid
    black lines), along with their perturbations 
$(\gamma_n \circ h_1)_{n=0}^3$ (red lines with triangle markers) and 
    $(\gamma_n \circ h_2)_{n=0}^3$ (blue lines with circle markers). The
    functions corresponding to $n=0, \ 1,\ 2,\ 3$ are plotted in panels a, b, c, d, respectively.}
    %} 
   \label{fig:4}
   \vskip 0.1in
\end{figure*}
\FloatBarrier

In summary, we showed in the previous examples that the approximation properties of
Hermite functions can be improved by composing them with a proper linear
or nonlinear
function of bounded volume distortion. This result is, indeed, not restricted to Hermite
functions and holds for other bases. Further, we demonstrated that the inducing mappings can be constructed
\emph{via} normalizing flows, \ie, invertible neural networks. While the examples are rather
simple, we note that a few
recent works have already made important first steps into more complex applications.
Specifically, \textcite{Cranmer:arXiv1904:05903} and
\textcite{Saleh:JCTC21:5221} utilized bi-Lipschitz normalizing flows to
optimize non-linear
perturbations to orthonormal bases of $L^2$ for solving differential equations.
\textcite{Saleh:JCTC21:5221} reported several orders-of-magnitude 
increased accuracy and improved convergence rates
upon optimizing a perturbation of the underlying basis. Our
results serve as a theoretical framework for such applications and open up new
avenues for the design and numerical analysis of problem-specific complete 
sequences.

\section*{Acknowledgments}
The authors would like to thank Eleonora Ficola, Stephan Wojtowytsch, 
Álvaro Fernández Corral and Emil Vogt for 
useful discussions and constructive feedback.

We acknowledge the support by the Deutsche Forschungsgemeinschaft (DFG) 
within the Research Training Group GRK 2583 ``Modeling, Simulation and Optimization of Fluid Dynamic Applications''. 

\section*{Author Contributions} Y. S. conceptualized the study, developed the
theory, wrote the manuscript and performed the numerical experiments. A.I.
contributed to the discussion and interpretation of results, and proofreading of the manuscript. 

\section*{Data Availability} The manuscript has no associated data. 

\section*{Code Availability} 

The codes we developed to produce the numerical results in \Cref{sec:summary}
are available at \url{https://github.com/robochimps/bases_by_composition}.
\section*{Declarations}

\textbf{Conflict of interest.} The authors declare that they have no conflict of interest.
\printbibliography%
%\bibliography{string,cmi}
\end{document}

%% Local Variables:
%% coding: utf-8
%% mode: LaTeX
%% mode: auto-fill
%% mode: flyspell
%% fill-column: 100
%% ispell-dictionary: "american"
%% reftex-cite-format: default
%% TeX-auto-save: t
%% TeX-close-quote: "''"
%% TeX-open-quote: "``"
%% TeX-parse-self: t
%% truncate-lines: t
%% End: